\newtheorem{theorem}{Theorem}%[section]
\newtheorem{lemma}[theorem]{Lemma}
\newtheorem{corollary}[theorem]{Corollary}
\newtheorem{proposition}[theorem]{Proposition}
\theoremstyle{remark}
\newtheorem{remark}[theorem]{Remark}
\theoremstyle{remark}
\newtheorem{example}[theorem]{Example}
\newcommand\Rmd{{\text{\rm Mod-}}}
\newcommand{\Hom}{\mathop{\rm Hom}\nolimits}
\newcommand{\End}{\mathop{\rm End}\nolimits}
\newcommand{\Img}{\mathop{\rm Im}\nolimits}
\newcommand\CS{{\mathcal S}}
\newcommand\N{{\mathbb N}} % the set of natural numbers
\newcommand\Z{{\mathbb Z}} % the set of integers
\newcommand\J{{\mathbb J}} % the set of ?-adic integers
\newcommand\Q{{\mathbb Q}} % the set of rationals
\newcommand\id{{\bf 1}}
\newcommand{\m}{\mathfrak{m}}
\begin{document}

\title{Constructing large self--small modules}
\author{George Ciprian Modoi}
\address{Babe\c s--Bolyai University, Faculty of Mathematics and Computer Science \\  1, Mihail Kog\u alniceanu, 400084 Cluj--Napoca, Romania}
\email{cmodoi@math.ubbcluj.ro}

%\thanks{}

\subjclass{16D99, 20K40}
\keywords{self--small module, weak epimorphism of rings}

\date{\today}

\begin{abstract} We give a method for constructing (possible large) self--small modules via some special homomorphisms of rings, called here weak epimorphisms. 
\end{abstract}
\maketitle

Various kind of smallness appear naturally in the  study of situations in which the covariant or contravariant hom functor induces an equivalence, 
respectively a duality between some categories of modules. For example Morita theory says that if $R$ is an arbitrary ring, $P$ is a progenerator in the category $\Rmd R$ of right 
$R$-modules and $E=\End_R(P)$ is its endomorphism ring, then 
the functor $\Hom_R(P,-):\Rmd R\to\Rmd E$   is an equivalence, with the inverse the tensor product $-\otimes_EP$. In these conditions, $P$ has to be {\em small}, 
that is $\Hom_R(P,-)$ has to commute with arbitrary direct sums. 

The smallness notion can be generalized in various ways, by imposing some restrictions to the class of direct sums which the covariant hom functor has to commute. 
In this note we deal with the following generalization:  A {\em self--small} $R$-module
is a module $M$ such that $\Hom_R(M,M^{(I)})\cong\Hom(M,M)^{(I)}$, naturally for every set $I$.
Self--small abelian groups (that is, $\Z$-modules) were introduced by Arnold an Murley in \cite{AM75}. 
The relevance of the study of self--small abelian groups is justified by many papers (see, for example, \cite{ABW09} and the references therein).

In this note we want to construct a module which is self--small but it is large in some sense. More precisely, we want this self--small module to be not small. 
Because finitely generated modules are always small, the modules we are looking for have to be infinitely generated. The 
method is inspired by the construction of the abelian group of $p$-adic integers $\J_p$, where $p$ is a prime. In this case, $\J_p$ is uncountable, that is 
its cardinality is also larger than the cardinality of the ring of integers $\Z$.   

Note that another way of constructing large self--small modules can be found in \cite{Z08}. More precisely, from \cite[Example 2.7]{Z08} we learn that the direct product $\prod_{p}\Z/p$ 
is self--small, but the direct sum $\bigoplus_{p}\Z/p$ is not, where $p$ runs over all primes and 
$Z/n=\Z/n\Z$ for every $n\in\N$.  
More generally, for a ring $R$ let denote by $\CS_R$ a representative set of simple modules. Then in \cite[Theorem  2.5 and Corollary 1.3]{Z08} we find some sufficient conditions for the 
direct product $\prod_{S\in\CS_R}S$ to be, respectively to be not self--small.

In what follows  we consider two rings with one $R$ and $J$, and we denote by $\Rmd R$ and $\Rmd J$ the respective categories of modules (which by default are left modules). 
Let $\varphi:R\to J$ a unitary ring
homomorphism. Thus $J$ has a natural structure of $R-R$-bimodule
and $\varphi$ induces a pair of adjoint functors (the restriction
and the induction of the scalars):
\[\varphi_*=\Hom_J(J,-):\Rmd J\rightleftarrows
\Rmd R:(-\otimes_RJ)=\varphi^*.\] The restriction functor $\varphi_*$ acts as follows: $\varphi_*(M)=M$ and $ax=\varphi(a)x$ for all $J$-modules $M$ and all $x\in M$ and $a\in R$. Henceforth it is obviously faithful, since it sends a $J$-linear map in itself, but seen as $R$-linear.

Recall that $\varphi$ is called an {\em epimorphism of rings}, if for every two parallel homomorphisms of rings $\psi,\zeta:J\to J'$ we have  
\[\psi\cdot\varphi=\zeta\cdot\varphi\Rightarrow\psi=\zeta.\] This 
is the case, exactly if $\varphi_*$ is full too, by \cite[Ch. XI, Proposition 1.2]{S}, therefore if we have $\Hom_R(M,N)\cong\Hom_J(M,N)$ for all $M,N\in\Rmd J$. Inspired by this, we call $\varphi$ {\em weak epimorphism} if $\Hom_R(J,J)\cong\Hom_J(J,J)$, that is $\Hom_R(J,J)\cong J$.

\begin{proposition}\label{qepiss}
 If $\varphi: R\to J$ is a weak epimorphism of rings, then $J$ is self--small as $R$-module. 
\end{proposition}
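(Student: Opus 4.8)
The plan is to show that for every set $I$ the canonical comparison map
\[\alpha_I:\Hom_R(J,J)^{(I)}\longrightarrow\Hom_R(J,J^{(I)}),\qquad (g_i)_{i\in I}\longmapsto\bigl(x\mapsto(g_i(x))_{i\in I}\bigr),\]
is an isomorphism; self--smallness of $J$ as an $R$-module is precisely this assertion, and naturality in $I$ will be a routine check once bijectivity is in hand. Injectivity of $\alpha_I$ is immediate: if $\alpha_I((g_i))=0$, then composing with the $i$-th projection $p_i:J^{(I)}\to J$ yields $g_i=0$ for every $i$. Thus the whole content lies in the surjectivity, and this is exactly where the weak epimorphism hypothesis will be used.

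For surjectivity, fix $f\in\Hom_R(J,J^{(I)})$ and set $g_i=p_if\in\Hom_R(J,J)$. The key observation is that each $g_i$ is forced to be $J$-linear: by definition of a weak epimorphism the restriction map $\Hom_J(J,J)\to\Hom_R(J,J)$ (the action of $\varphi_*$ on endomorphisms) is the isomorphism $\Hom_R(J,J)\cong J\cong\Hom_J(J,J)$, so every $R$-linear endomorphism of $J$ is already $J$-linear. Since a $J$-linear endomorphism of ${}_JJ$ is right multiplication by its value at $1$, we obtain $g_i(x)=x\,c_i$ with $c_i=g_i(1)=p_i(f(1))$.

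It now follows that the family $(g_i)_{i\in I}$ has finite support. Indeed $f(1)$ lies in the \emph{direct sum} $J^{(I)}$, so $c_i=p_i(f(1))=0$ for all $i$ outside a finite set $F$; consequently $g_i=0$ for $i\notin F$, and $(g_i)_{i\in I}\in\Hom_R(J,J)^{(I)}$. Finally $\alpha_I((g_i))$ and $f$ have the same $i$-th component $g_i=p_if$ for every $i$, whence $f=\alpha_I((g_i))$ and $\alpha_I$ is onto.

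I expect the only genuinely delicate step to be the middle one. An arbitrary $R$-linear map $J\to J^{(I)}$ need not be globally finitely supported, and a priori its components $p_if$ are not determined by their value at $1$; it is exactly the weak epimorphism condition that upgrades each $p_if$ to a $J$-linear map, reducing it to the single element $p_i(f(1))$ and thereby transferring the finite support of $f(1)$ to the whole family. Everything else — injectivity, the recovery of $f$ from its components, and naturality of $\alpha_I$ in $I$ — is formal.
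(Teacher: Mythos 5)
Your proof is correct and follows essentially the same route as the paper's: compose $f$ with the projections, use the weak epimorphism hypothesis to upgrade each component $p_if$ to a $J$-linear map determined by $p_i(f(1))$, and conclude finite support from $f(1)\in J^{(I)}$. You merely spell out the comparison map $\alpha_I$ and its injectivity more explicitly than the paper does.
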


\begin{proof}
 Let $I$ be a set and denote by $\pi_i:J^{(I)}\to J$ the projection of the coproduct of copies of $J$ into its $i$-th component ($i\in I$). 
 If $f:J\to J^{(I)}$ is an arbitrary $R$-linear map, then $\pi_if:J\to J$ is $R$-linear for all $i\in I$.  According to our hypothesis it is $J$-linear too, therefore it is determined by $\pi_if(1)\in J$. Because $\pi_if(1)\neq 0$ only for a finite number of $i$'s, we conclude that $\pi_If=0$ for almost all $i\in I$, hence $f$ factors through a finite subcoprodct of $J^{(I)}$, what is the same as saying that $\Hom_R\left(J,J^{(I)}\right)\cong\Hom(J,J)^{(I)}$.   
\end{proof}

Since epimorphisms of rings are obviously weak epimorphisms too we obtain:  

\begin{corollary}\label{episs}  If $\varphi:R\to J$ is an epimorphism of rings, then the $R$-module $J$ is self--small.
\end{corollary}

\begin{example}
 The inclusion $\Z\to\Q$ is known to be an epimorphism of rings, namely one which is not surjective. Therefore 
 Corollary \ref{episs} above gives us a new proof that the abelian group  $\Q$ is self--small.
\end{example}

In the sequel we assume that the ring $R$ is commutative.
Thus $\Rmd R$ coincide to the category of right $R$-modules, and $\Hom_R(M,N)$ is an $R$-module for all $M,N\in\Rmd R$. In
$\Rmd R$ consider an ascending chain of submodules
\[{\rm (DS)}\ Z_1\overset{\mu_1}\to Z_2\overset{\mu_2}\to Z_3\to\ldots,\]
of the module
\[Z_\infty=\displaystyle{\lim_\to}Z_n=\lim_\to(Z_1\overset{\mu_1}\to
Z_2\overset{\mu_2}\to Z_3\to\ldots)=\bigcup Z_n,\] the morphisms
$\mu_n$ being inclusions. 
Relative to the above chain consider the following conditions:
\begin{enumerate}[(1)]
\item All modules $Z_m$ are finitely presented. \item
$\Hom_R(Z_m,Z_n)\cong Z_m$ naturally, for all $1\leq m\leq n$.
\item The $R$-module $Z_\infty$ is injective relative to all exact
sequences \[0\to Z_m\overset{\mu_n}\to Z_{m+1}\to
Z_{m+1}/Z_m\to0,\] with $m\geq 1$. 
\item $Z_1$ is simple, and denote by $U$ the annihilator of $Z_1$ in $R$, that is $U$ is a maximal ideal in $R$ and there is a short exact sequence \[0\to U\to R\to Z_1\to 0.\]
Moreover assume that $Z_{m+1}U=Z_n$, for all $m\in\N^*$.
\item $Z_m\otimes Z_1\cong Z_1$ naturally, for all $m\in\N^*$. 
\end{enumerate}

Note that the condition (3) is automatically satisfied, if we know that the 
$R$-module $Z_\infty$ is injective. 
On the other hand we can replace (3) with a condition relative to the direct
system (DS), rather than relative to its direct limit $Z_\infty$, as in the the
following:

\begin{itemize}
\item[(3')] The $R$-module $Z_n$ is injective relative to all
exact sequences \[0\to Z_m\overset{\mu_n}\to Z_{m+1}\to
Z_{m+1}/Z_m\to0,\] with $1\leq m<n$.
\end{itemize}

\begin{lemma}\label{znforz} If (1) and (3') are satisfied then
(3) holds too.
\end{lemma}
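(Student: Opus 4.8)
The plan is to reduce the extension problem against the infinite union $Z_\infty$ to a single extension problem inside a finite stage $Z_n$, where (3') can be applied directly. To verify (3) I must show that for every $m\geq 1$ an arbitrary $R$-linear map $f\colon Z_m\to Z_\infty$ extends along the inclusion $\mu_m\colon Z_m\to Z_{m+1}$ to some map $Z_{m+1}\to Z_\infty$; equivalently, that $\Hom_R(\mu_m,Z_\infty)$ is surjective.

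First I would invoke condition (1) — in fact only the weaker fact that each $Z_m$ is finitely generated — together with the observation that $Z_\infty=\bigcup_n Z_n$ is the union of the ascending chain (DS). Since $Z_m$ is finitely generated, its image $f(Z_m)$ is a finitely generated submodule of $Z_\infty$, so each of its finitely many generators lies in some stage $Z_{n_i}$; because the chain is ascending I may set $n=\max_i n_i$, and after enlarging $n$ if necessary I may assume $n>m$. Hence $f$ factors as $f=\iota_n f'$, where $\iota_n\colon Z_n\to Z_\infty$ is the canonical inclusion and $f'\colon Z_m\to Z_n$.

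Next, since $1\leq m<n$, condition (3') asserts precisely that $Z_n$ is injective relative to the sequence $0\to Z_m\overset{\mu_m}\to Z_{m+1}\to Z_{m+1}/Z_m\to0$, so $f'$ extends to some $g\colon Z_{m+1}\to Z_n$ with $g\mu_m=f'$. Composing with the inclusion yields $\iota_n g\colon Z_{m+1}\to Z_\infty$, and $(\iota_n g)\mu_m=\iota_n f'=f$, which is the desired extension; this establishes (3).

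The only real content lies in the factorization step, namely that a map out of a finitely generated module into the directed union $Z_\infty$ must factor through a single stage $Z_n$. This is where hypothesis (1) enters, and it is the point I would treat most carefully, but it is nothing more than the standard smallness of finitely generated modules with respect to directed unions. Everything after the factorization is a formal diagram chase using (3') for one sufficiently large index $n$.
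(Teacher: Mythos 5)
Your proof is correct and is essentially the paper's argument: the paper phrases the reduction abstractly, using that $\Hom_R(Z_i,-)$ commutes with the direct limit (by finite generation from (1)) so that the direct limit of the surjections $\Hom_R(Z_{m+1},Z_n)\to\Hom_R(Z_m,Z_n)$ given by (3') is again surjective, while you unwind the same fact element-by-element by factoring a given map $Z_m\to Z_\infty$ through a finite stage and then applying (3') there. The two arguments rest on exactly the same two ingredients, so there is nothing to add.
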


\begin{proof} The condition (3') implies that the induced
homomorphism
\[\Hom_R(Z_{m+1},Z_n)\to\Hom_R(Z_m,Z_n)\] is
surjective for all $1\leq m<n$. The condition (1) says that all
$Z_{i}$, $i\geq1$ are
finitely generated, and this means the functors
$\Hom_R(Z_i,-)$ commute with direct limits as we can 
see from \cite[Ch. V, Proposition 3.4]{S}. We deduce that the induced homomorphism
\begin{align*} \Hom_R(Z_{m+1},Z_\infty)&\cong\lim_\to\Hom_R(Z_{m+1},Z_n)\\
&\to\lim_\to\Hom_R(Z_m,Z_n)\cong\Hom_R(Z_m,Z_\infty) \end{align*} is also
surjective, therefore (3) holds.
\end{proof}

\begin{lemma}\label{homzz} If (1) and (2) hold, we have for all $m\geq 1$ a natural
isomorphism:
 \[\Hom_R(Z_m,Z_\infty)\cong Z_m.\]
\end{lemma}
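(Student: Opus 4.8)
The plan is to reduce the computation of $\Hom_R(Z_m,Z_\infty)$ to a direct limit of the groups $\Hom_R(Z_m,Z_n)$ and then to evaluate that limit using condition (2). First I would invoke the finiteness hypothesis: by (1) each $Z_m$ is finitely presented, in particular finitely generated, so the functor $\Hom_R(Z_m,-)$ commutes with direct limits — this is exactly the fact already used in the proof of Lemma \ref{znforz} via \cite[Ch. V, Proposition 3.4]{S}. Applying it to $Z_\infty=\lim_\to Z_n$ gives a natural isomorphism
\[\Hom_R(Z_m,Z_\infty)\cong\lim_\to\Hom_R(Z_m,Z_n),\]
where the direct system on the right has transition maps induced by post-composition with the inclusions $\mu_n\colon Z_n\to Z_{n+1}$.

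Next, since a direct limit over $\N$ is unchanged by passing to a cofinal subsystem, I would discard the indices $n<m$ and keep only $n\geq m$. For these indices condition (2) supplies isomorphisms $\theta_{m,n}\colon\Hom_R(Z_m,Z_n)\xrightarrow{\ \cong\ }Z_m$. The point I want to extract from the word ``naturally'' in (2) is naturality in the second variable $n$: for $m\leq n\leq n'$ the square relating $\theta_{m,n}$, $\theta_{m,n'}$, the induced map $\Hom_R(Z_m,Z_n)\to\Hom_R(Z_m,Z_{n'})$, and the identity of $Z_m$ must commute. Since the target $Z_m$ does not depend on $n$, commutativity forces each transition map to coincide, after the identifications, with the isomorphism $\theta_{m,n'}^{-1}\theta_{m,n}$ of $Z_m$; in other words the whole direct system is isomorphic to the constant system on $Z_m$ with invertible transition maps.

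Finally, a direct limit of a system whose transition maps are isomorphisms is (canonically) any one of its terms, so $\lim_\to\Hom_R(Z_m,Z_n)\cong Z_m$. Composing with the first isomorphism yields $\Hom_R(Z_m,Z_\infty)\cong Z_m$, and the naturality is inherited from that of the colimit isomorphism and of (2).

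I expect the main obstacle to be pinning down precisely what ``naturally'' means in (2) and checking that it genuinely turns the transition maps of the direct system into isomorphisms of $Z_m$, rather than merely supplying abstract isomorphisms of the individual terms. If the hypothesis is read only as a term-by-term family of isomorphisms, the colimit cannot be controlled; it is the compatibility of the $\theta_{m,n}$ with the structure maps $\mu_n$ that does all the work, so that is the step I would write out most carefully.
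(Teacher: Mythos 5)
Your proposal is correct and follows essentially the same route as the paper: use (1) to see that $\Hom_R(Z_m,-)$ commutes with direct limits, then use (2) to identify the direct system $\{\Hom_R(Z_m,Z_n)\}_n$ with one that is constant equal to $Z_m$ on the cofinal part $n\geq m$. Your extra care about the meaning of ``naturally'' in (2) — checking that the transition maps, not just the individual terms, are identified — is exactly the point the paper compresses into the phrase ``cofinal constant subsystem.''
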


\begin{proof} Using again the property that
$\Hom_R(Z_i,-)$ commutes with direct limits, for all $i\geq 1$, we get:
\[\Hom_R(Z_m,Z_\infty)=\Hom_R(Z_m,\lim_\to
Z_n)\cong\lim_\to\Hom_R(Z_m,Z_n)\cong Z_m,\] where the last isomorphisms follows from the fact that 
(2) implies that the direct system $\{\Hom_R(\Z_m,\Z_n)\}_{n\geq1}$ looks like
\[\Hom_R(Z_1,Z_m)\to\ldots \to\Hom_R(Z_{m-1},Z_m)\to Z_m\overset{=}\to Z_m\overset{=}\to Z_m\overset{=}\to\ldots,\] that is it has a cofinal constant subsystem. 
\end{proof}

Assume that (1) and (2) hold. For all $n\geq 1$, we denote  $\delta_n$ the composed homomorphism
\[Z_{n+1}\overset{\cong}\to\Hom_R(Z_{n+1},Z_\infty)\overset{(\mu_n)_*}\to\Hom_R(Z_n,Z_\infty)\overset{\cong}\to
Z_n,\] where the isomorphisms are coming from Lemma \ref{homzz}. We obtain
an inverse system of $R$-modules
\[{\rm (IS)}\ Z_1\overset{\delta_1}\leftarrow Z_2\overset{\delta_2}\leftarrow Z_3\leftarrow\ldots\]
Let now denote $J=\End_R(Z_\infty)$. Thus $J$ is naturally an
$R$-algebra, and let $\varphi:R\to J$ denote the structure
homomorphism of this algebra.

\begin{lemma}\label{jislim} If (1) and (2) hold, we have a natural isomorphism in $\Rmd R$:
\[J\cong\lim_\leftarrow Z_n=\lim_\leftarrow(Z_1\overset{\delta_1}\leftarrow Z_2\overset{\delta_2}\leftarrow
Z_3\leftarrow\ldots).\]
\end{lemma}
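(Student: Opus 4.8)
The plan is to compute $J = \End_R(Z_\infty)$ directly by exploiting the fact that $Z_\infty = \bigcup Z_n$ is the union (directed colimit) of the chain. An endomorphism $f \in \End_R(Z_\infty)$ is completely determined by its restrictions $f|_{Z_n}: Z_n \to Z_\infty$, and because the $Z_n$ exhaust $Z_\infty$ these restrictions must be compatible with the inclusions $\mu_n$. So the first step is to set up the natural map $J \to \lim_\leftarrow Z_n$ whose $n$-th coordinate sends $f$ to the element of $Z_n$ corresponding, under the isomorphism $\Hom_R(Z_n, Z_\infty) \cong Z_n$ of Lemma \ref{homzz}, to the composite $f \circ \mu_n^\infty: Z_n \hookrightarrow Z_\infty \overset{f}\to Z_\infty$, where $\mu_n^\infty$ denotes the structural inclusion into the colimit.

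The crux is to verify that this family of coordinates is genuinely a thread of the inverse system (IS), i.e.\ that the transition maps $\delta_n$ carry the $(n+1)$-st coordinate to the $n$-th. This is exactly where the definition of $\delta_n$ pays off: $\delta_n$ was built as the composite $Z_{n+1} \cong \Hom_R(Z_{n+1}, Z_\infty) \overset{(\mu_n)_*}\to \Hom_R(Z_n, Z_\infty) \cong Z_n$, and precomposition by $\mu_n$ in the Hom functor corresponds precisely to restricting $f \circ \mu_{n+1}^\infty$ along $\mu_n$. Since $\mu_{n+1}^\infty \circ \mu_n = \mu_n^\infty$ by the colimit cocone identities, the two coordinates associated to $f$ at levels $n$ and $n+1$ are linked by $\delta_n$, so the map $J \to \lim_\leftarrow Z_n$ is well defined. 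Naturality in $R$ (i.e.\ $R$-linearity) follows because every isomorphism invoked is natural and $\End_R$ is an $R$-module via the commutative ring $R$.

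To finish I would show the map is bijective by constructing a two-sided inverse. Given a thread $(z_n)_n \in \lim_\leftarrow Z_n$, each $z_n$ corresponds under Lemma \ref{homzz} to an $R$-linear map $g_n: Z_n \to Z_\infty$, and the thread condition guarantees $g_{n+1} \circ \mu_n = g_n$, so the $g_n$ are compatible with the inclusions. By the universal property of the colimit $Z_\infty = \lim_\to Z_n$, they glue to a unique $g: Z_\infty \to Z_\infty$, i.e.\ an element of $J$. Checking that these two assignments are mutually inverse is a routine diagram chase using uniqueness in the universal property, so the composite is an isomorphism $J \cong \lim_\leftarrow Z_n$ in $\Rmd R$.

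I expect the main obstacle to be purely bookkeeping rather than conceptual: keeping the direction of the arrows straight and confirming that the transition map $\delta_n$ really is \emph{restriction} of an endomorphism at the level of elements, not some dual or twisted operation. Concretely, one must be careful that the isomorphism $\Hom_R(Z_n, Z_\infty) \cong Z_n$ from Lemma \ref{homzz}, which ultimately factors through the identification $\Hom_R(Z_n, Z_n) \cong Z_n$ in condition (2) together with the cofinal constant subsystem, is compatible across the two levels $n$ and $n+1$ in a way that turns $(\mu_n)_*$ into $\delta_n$. Once the naturality of these identifications is pinned down, the rest is formal.
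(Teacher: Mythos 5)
Your proposal is correct and is essentially the paper's own argument: the paper simply writes $J=\Hom_R(\lim_\to Z_n,Z_\infty)\cong\lim_\leftarrow\Hom_R(Z_n,Z_\infty)\cong\lim_\leftarrow Z_n$, invoking the standard fact that $\Hom_R(-,Z_\infty)$ turns the colimit into an inverse limit and then applying Lemma \ref{homzz}, with the transition maps matching $\delta_n$ by the very definition of $\delta_n$. Your write-up just unpacks that continuity isomorphism by hand via restriction along the structural inclusions and the universal property of the union, which is the same proof in element-level detail.
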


\begin{proof} The chain of isomorphisms (the last one coming from Lemma \ref{homzz})
\[J=\Hom_R(Z_\infty,Z_\infty)\cong\Hom_R(\lim_\to Z_n,Z_\infty)\cong\lim_\leftarrow\Hom_R(Z_n,Z_\infty)\cong\lim_\leftarrow Z_n. \]
proves our lemma.
\end{proof}

For the inverse system (IS) we denote $\delta_{jj}=1_{Z_j}$ and 
$\delta_{ji}=\delta_j\ldots\delta_i$, for all $1\leq j\leq i$.  With these notations, the  inverse system is called {\em Mittag--Leffler} if for each 
$k\geq 1$ there is $j>k$ such that  $\Img(\delta_{ki})=\Img(\delta_{kj})$ 
for all $j\leq i$. In particular this is always true, provided that the homomorphisms $\delta_i$ are surjective, for  all $i\geq 1$. 

\begin{lemma}\label{zml} If (1), (2) and (3) hold, then the inverse
system (IS) is
Mittag--Leffler.
\end{lemma}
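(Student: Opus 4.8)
The plan is to reduce the Mittag--Leffler condition to the surjectivity of the connecting maps $\delta_n$, for which the text already records that surjective connecting maps force the system to be Mittag--Leffler. So the entire task amounts to proving that each $\delta_n$ is an epimorphism, and everything will follow from unpacking condition (3) together with the factorization defining $\delta_n$.

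First I would reinterpret condition (3). To say that $Z_\infty$ is injective relative to the exact sequence $0\to Z_n\overset{\mu_n}\to Z_{n+1}\to Z_{n+1}/Z_n\to 0$ is, by definition of relative injectivity, to say that every $R$-linear map $Z_n\to Z_\infty$ extends along $\mu_n$ to a map $Z_{n+1}\to Z_\infty$. Equivalently, the restriction map
\[(\mu_n)_*:\Hom_R(Z_{n+1},Z_\infty)\to\Hom_R(Z_n,Z_\infty)\]
is surjective for every $n\geq 1$. This is exactly the same kind of surjectivity that was exploited in the proof of Lemma \ref{znforz}, here applied directly at the level of $Z_\infty$ rather than at a finite stage $Z_n$.

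Next I would recall the very definition of $\delta_n$: it is the composite
\[Z_{n+1}\overset{\cong}\to\Hom_R(Z_{n+1},Z_\infty)\overset{(\mu_n)_*}\to\Hom_R(Z_n,Z_\infty)\overset{\cong}\to Z_n,\]
where the two outer maps are the isomorphisms furnished by Lemma \ref{homzz} (which requires exactly (1) and (2)). Since composing a surjection with isomorphisms on either side preserves surjectivity, the previous paragraph shows that $\delta_n$ is surjective for all $n\geq 1$. Having all connecting maps surjective, the inverse system (IS) is Mittag--Leffler by the observation stated just before the lemma, which completes the argument.

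I do not expect a genuine obstacle here; the proof is essentially a bookkeeping of definitions. The only point that must be stated carefully is the translation of "$Z_\infty$ is injective relative to $0\to Z_n\to Z_{n+1}\to Z_{n+1}/Z_n\to 0$" into the surjectivity of $(\mu_n)_*$, and the fact that this map is literally the middle arrow of the factorization of $\delta_n$ coming from Lemma \ref{homzz}. Once these two identifications are in place, surjectivity of $\delta_n$, and hence the Mittag--Leffler property, is immediate.
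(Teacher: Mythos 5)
Your proof is correct and follows exactly the paper's argument: condition (3) makes $(\mu_n)_*$ surjective, hence $\delta_n$ is surjective as a composite of this map with the isomorphisms of Lemma \ref{homzz}, and surjective connecting maps give the Mittag--Leffler property. Your version merely spells out the definitional unpacking that the paper leaves implicit.
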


\begin{proof} The homomorphism $(\mu_n)_*$ is surjective by (3), so the same
property is true for $\delta_n$, and the conclusion follows.
\end{proof}

From now on, we assume that all conditions (1)-(5) hold. 

\begin{lemma}\label{n+m/n}
We have $Z_{n+m}/Z_m\cong Z_n$ for all 
$n,m\in\N^*$.
\end{lemma}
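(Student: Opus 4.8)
The plan is to induct on $n$, treating all $m\in\N^*$ at once. For $n=1$ I would read the isomorphism straight off conditions (4) and (5): by (4) the submodule $Z_m\subseteq Z_{m+1}$ equals $UZ_{m+1}$ and $R/U\cong Z_1$, so
\[Z_{m+1}/Z_m = Z_{m+1}/UZ_{m+1}\cong Z_{m+1}\otimes_R(R/U)\cong Z_{m+1}\otimes_R Z_1\cong Z_1,\]
the last step being (5). In particular every factor of the chain (DS) is simple and isomorphic to $Z_1$, so each $Z_j$ has finite length $j$ with all composition factors $\cong Z_1$; this makes length additive on the short exact sequences below and will be used repeatedly.

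For the inductive step I would assume $Z_{n+k}/Z_k\cong Z_n$ for the given $n$ and all $k\ge1$, and apply the third isomorphism theorem to the chain $Z_m\subseteq Z_{m+1}\subseteq Z_{m+n+1}$. Using the base case on the sub and the inductive hypothesis (with $k=m+1$) on the quotient yields
\[0\to Z_1\to Z_{m+n+1}/Z_m\to Z_n\to0.\]
Since $Z_{n+1}$ fits in the parallel sequence $0\to Z_1\to Z_{n+1}\to Z_n\to0$ (the inductive hypothesis with $k=1$), both modules are length-$(n+1)$ extensions of $Z_n$ by $Z_1$, and it remains to identify the former with $Z_{n+1}$.

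This identification is the crux, for two extensions with the same kernel and cokernel need not share an isomorphic middle term. To force it I would turn to the inverse system (IS). By Lemma \ref{zml} its transition maps $\delta_k\colon Z_{k+1}\to Z_k$ are surjective, hence have length-one kernels; and because condition (2) gives $\Hom_R(Z_1,Z_j)\cong Z_1$, every $Z_j$ has simple socle, so this kernel is forced to be the socle $Z_1\subseteq Z_{k+1}$. The naturality built into the isomorphisms of (2) makes each $\delta_k$ send $Z_i$ onto $Z_{i-1}$ (and $Z_1$ to $0$), so the composite $\delta_{n+1}\cdots\delta_{n+m}\colon Z_{m+n+1}\to Z_{n+1}$ is surjective with kernel exactly $Z_m$; the induced isomorphism $Z_{m+n+1}/Z_m\cong Z_{n+1}$ then closes the induction. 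The point on which I expect to spend the most care is the very last one---showing that the length-$m$ kernel is the submodule $Z_m$ and not merely some submodule of the same length---since this is really the assertion that conditions (2) and (3) force the relevant part of the submodule lattice of $Z_\infty$ to be a chain.
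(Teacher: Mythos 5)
Your architecture is sound but genuinely different from the paper's. The paper fixes $n$, obtains the case $m=1$ by applying $\Hom_R(-,Z_\infty)$ to $0\to Z_n\to Z_{n+1}\to Z_1\to 0$ (using (3) for exactness and Lemma \ref{homzz} to identify all three terms), and then inducts on $m$ via a $3\times 3$ diagram; you instead induct on $n$, reduce to an extension problem via the third isomorphism theorem, and resolve that extension problem by exhibiting $Z_{n+1}$ as the image of $Z_{n+m+1}$ under a composite of the transition maps $\delta_k$ of (IS). Your opening step (the tensor computation of $Z_{m+1}/Z_m$ from (4) and (5)) coincides with the paper's. Your route has a real advantage: once you know that each $\delta_k$ is surjective (Lemma \ref{zml}) and carries $Z_i$ onto $Z_{i-1}$, the composite $\delta_n\cdots\delta_{n+m-1}:Z_{n+m}\to Z_n$ is surjective with kernel of length $m$ containing $Z_m$, hence equal to $Z_m$ --- and this alone proves the whole lemma, so both your induction on $n$ and the extension discussion are actually dispensable. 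It also sidesteps the paper's $3\times3$ diagram, whose right-hand square is delicate as drawn: since $Z_{m+1}\subseteq Z_{n+m}$, the composite $Z_{m+1}\hookrightarrow Z_{n+m+1}\to Z_{n+m+1}/Z_{n+m}$ vanishes while $Z_{m+1}\to Z_{m+1}/Z_m$ does not.

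The one genuine soft spot is exactly the one you flag: the claim that $\delta_k(Z_i)=Z_{i-1}$, equivalently that the length-$m$ kernel of the composite is the chain submodule $Z_m$ rather than merely some submodule of length $m$. ``Naturality built into (2)'' does not deliver this: (2) is an isomorphism $\Hom_R(Z_m,Z_n)\cong Z_m$, and there is no canonical map $Z_{k+1}\to Z_i$ along which naturality in the contravariant variable could be invoked; nor may you appeal to uniseriality, which is not among the hypotheses. As written the step is a gap. It is, however, cleanly fillable from condition (4): iterating $Z_{j+1}U=Z_j$ gives $Z_i=Z_{k+1}U^{k+1-i}$, and since $\delta_k$ is $R$-linear and surjective, $\delta_k(Z_i)=\delta_k(Z_{k+1})U^{k+1-i}=Z_kU^{k+1-i}=Z_{i-1}$ (with $Z_0=0$). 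Your socle argument identifying $\ker\delta_k$ with $Z_1$ is correct (every simple submodule of $Z_{k+1}$ is isomorphic to $Z_1$ by Jordan--H\"older, and $\Hom_R(Z_1,Z_{k+1})\cong Z_1$ forces the socle to be simple), but it becomes redundant once the displayed computation is in place.
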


\begin{proof} First we will show that $Z_{n+1}/Z_n\cong Z_1$ for all 
$n\in\N^*$. Indeed, applying the functor $Z_{n+1}\otimes_R-$ to the short exact sequence $0\to U\to R\to Z_1\to 0$, keeping in the mind that 
$Z_n=\Z_{n+1}U$ is the image of the map $Z_{n+1}\otimes_RU\to Z_{n+1}\otimes_RR$ and using condition (5) for the isomorphism in the last vertical arrow, we get a commutative diagram with exact rows 
\[\diagram 
&Z_{n+1}\otimes_RU\rto\dto&Z_{n+1}\otimes_RR\rto\dto^{\cong}&Z_{n+1}\otimes_RZ_1\rto\dto^{\cong}&0 \\
0\rto&Z_n\rto&Z_{n+1}\rto&Z_1\rto&0
\enddiagram\]
which proves our claim.  
 
 Fix $n\in\N^*$ and proceed by induction on $m$. 
 For $m=1$, we apply the functor $\Hom_R(-,Z_\infty)$ to the exact sequence from the second row of the last diagram. According to (3), we get an exact sequence too, which by Lemma \ref{homzz} looks like: 
 \[0\to Z_1\to Z_{n+1}\to Z_n\to 0,\] proving our desired isomorphism 
 $Z_{n+1}/Z_1\cong Z_n$.

Suppose now that  
 $Z_{n+m}/Z_n\cong Z_m$. Then construct the diagram having exact rows and columns (the exactness of the rows is shown in the first part of this proof, the induction hypothesis gives exactness of the first column, and for the second column it is obvious):
 \[\diagram
 &0\dto&0\dto&& \\
 0\rto&Z_m\dto\rto&Z_{m+1}\dto\rto&Z_1\ddouble\rto&0 \\
 0\rto&Z_{n+m}\dto\rto&Z_{n+m+1}\dto\rto&Z_1\rto&0 \\
 &Z_n\dto&Z_{n+m+1}/Z_{m+1}\dto&& \\
 &0&0&&
 \enddiagram\]
 Now the Ker-Coker lemma gives us the isomorphism $Z_{n+m+1}/Z_{m+1}\cong Z_n$.
\end{proof}

\begin{remark}
 Puttig together above lemmas, we deduce that for all $n,m\in\N^*$ we have the short exact sequences \[0\to Z_n\to Z_{n+m}\to Z_m\to 0\hbox{ and }0\to Z_n\to Z_{n+m}\to Z_m\to 0\] and the functor $\Hom_R(-,Z_\infty)$ sends them to each other. 
\end{remark}

\begin{lemma}\label{jjz}
 There is a short exact sequence 
 \[0\to J\stackrel{u}\to J\to Z_1\to 0\]
  such that $\Img u=UJ$. 
\end{lemma}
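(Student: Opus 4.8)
The plan is to realise $u$ as the map induced on inverse limits by the chain inclusions. By Lemma \ref{jislim} we may identify $J\cong\lim_\leftarrow(Z_n,\delta_n)$, so it suffices to produce the desired sequence from the inverse system (IS). Taking $m=1$ in Lemma \ref{n+m/n} and using condition (4), we have for every $n\geq 1$ two short exact sequences: the inclusion sequence $0\to Z_n\overset{\mu_n}\to Z_{n+1}\to Z_1\to 0$, in which $\Img\mu_n=UZ_{n+1}=Z_n$, and its $\Hom_R(-,Z_\infty)$--transpose $0\to Z_1\to Z_{n+1}\overset{\delta_n}\to Z_n\to 0$. First I would organise the inclusions $\mu_n$ into a morphism of inverse systems
\[\mu\colon(Z_n,\delta_n)\longrightarrow(Z_{n+1},\delta_{n+1})\]
from (IS) to its shift, whose levelwise cokernel is the constant system $(Z_1,\id)$.

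The step I expect to be the main obstacle is checking that $\mu$ really is a morphism of inverse systems, i.e. that
\[\mu_n\delta_n=\delta_{n+1}\mu_{n+1}\qquad\text{as endomorphisms of }Z_{n+1}.\]
Equivalently, the transpose maps $\delta_n,\delta_{n+1}$ must assemble the inclusion sequences for $n+1$ and $n$ into a morphism of short exact sequences. I would deduce this from the very definition of $\delta_n$ as the image of $\mu_n$ under $\Hom_R(-,Z_\infty)$ together with the naturality of the isomorphism $\Hom_R(Z_m,Z_\infty)\cong Z_m$ of Lemma \ref{homzz}; this is precisely the compatibility recorded in the Remark, namely that $\Hom_R(-,Z_\infty)$ interchanges the two families of sequences. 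The same naturality shows that the induced transition maps on the cokernels $Z_{n+1}/Z_n\cong Z_1$ are the identity, so the cokernel system is constant with limit $Z_1$.

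Granting this, I obtain a short exact sequence of inverse systems
\[0\to(Z_n,\delta_n)\overset{\mu}\to(Z_{n+1},\delta_{n+1})\to(Z_1,\id)\to 0.\]
Every term $Z_n$ has finite length (indeed length $n$, since $Z_1$ is simple by (4) and $Z_{n+1}/Z_n\cong Z_1$), so each of these systems, and more generally any system built from subquotients of the $Z_n$, is Mittag--Leffler; in particular $\lim^1_\leftarrow$ vanishes and $\lim_\leftarrow$ is exact on the sequence above (this refines Lemma \ref{zml}). Applying $\lim_\leftarrow$, and using $\lim_\leftarrow(Z_{n+1},\delta_{n+1})\cong J$, then yields the exact sequence $0\to J\overset{u}\to J\to Z_1\to 0$ with $u=\lim_\leftarrow\mu$.

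It remains to identify $\Img u$ with $UJ$. Since $\lim_\leftarrow$ is exact here, $\Img u=\lim_\leftarrow(\Img\mu_n)=\lim_\leftarrow(UZ_{n+1})$ inside $J=\lim_\leftarrow Z_{n+1}$, and the inclusion $UJ\subseteq\lim_\leftarrow(UZ_{n+1})$ is clear. For the reverse inclusion I would use that $U$ is finitely generated, say $U=(a_1,\dots,a_k)$ -- which holds because $Z_1=R/U$ is finitely presented by (1) -- so that $Z_{n+1}^k\to UZ_{n+1}$, $(y_j)\mapsto\sum_j a_jy_j$, is a levelwise surjection of inverse systems whose kernel has finite--length terms and is therefore Mittag--Leffler. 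Hence $\lim_\leftarrow$ carries it to a surjection $J^k\to\lim_\leftarrow(UZ_{n+1})$, showing $\lim_\leftarrow(UZ_{n+1})=\sum_j a_jJ=UJ$. This gives $\Img u=UJ$ and completes the proof.
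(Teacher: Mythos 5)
Your proof follows essentially the same route as the paper: both realise $u$ as the inverse limit of the shift morphism of inverse systems $(Z_n,\delta_n)\to(Z_{n+1},\delta_{n+1})$ whose levelwise cokernel is the constant system $Z_1$, and both invoke the Mittag--Leffler condition to make $\lim_\leftarrow$ exact on this sequence. The only difference is that you spell out justifications for two steps the paper asserts without comment, namely the commutativity $\mu_n\delta_n=\delta_{n+1}\mu_{n+1}$ needed for the ladder to be a morphism of inverse systems, and the inclusion $\Img u\subseteq UJ$ via the finite generation of $U$.
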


\begin{proof}
Consider the diagram with exact columns: 
 \[\diagram 
 0\dto&0\dto&0\dto& \\
 0\dto&Z_1\lto\dto&Z_2\lto\dto&\cdots\lto \\
 Z_1\dto&Z_{2}\lto\dto&Z_{3}\lto\dto&\cdots\lto \\
 Z_1\dto&Z_1\lto_{=}\dto&Z_1\lto_{=}\dto&\cdots\lto \\
 0&0&0& 
 \enddiagram\]
 Note that the involved inverse systems are Mittag Leffler by Lemma \ref{zml}, therefore the their inverse limits are exact by \cite[Theorem 5]{ML}. Therefore the inverse limit gives us the desired short exact sequence. 
 
 By its construction the homomorphism $u$ acts as follows: for all
 $(x_1,x_2,x_3,\ldots)\in J$ (that is 
 $(x_1,x_2,x_3,\ldots)\in\prod_{n\geq 1}Z_n$ such that $\delta_n(x_{n+1})=x_n$, for all $n$) we have $u(x_1,x_2,x_3,\ldots)=(0,x_1,x_2,\ldots)$, so $UZ_{n+1}=Z_n$, for all $n\geq 1$ implies $UJ=\Img u$. 
\end{proof}

\begin{lemma}\label{homjz}
 The following sentences hold: 
 \begin{enumerate}[{\rm (a)}]
  \item For all $n\geq1$ we have $\Z_n\otimes_RJ\cong Z_n$ as (left) $J$-modules. 
  \item For all $n\geq1$ we have $\Hom_R(J,Z_n)\cong Z_n$ as $R$-modules.
 \end{enumerate}
\end{lemma}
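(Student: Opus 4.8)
The plan is to deduce both statements from a single strengthening of Lemma \ref{jjz}, namely the short exact sequence
\[0\to J\overset{u^n}\to J\to Z_n\to 0\quad\text{with}\quad\Img(u^n)=U^nJ,\]
valid for every $n\geq 1$ (the case $n=1$ being Lemma \ref{jjz} itself, so that the $n$--fold shift $u^n$ is the $n$--th power of the shift $u$). I would produce it by repeating, with $n$ in place of $1$, the Mittag--Leffler argument of Lemma \ref{jjz}: Lemma \ref{n+m/n} supplies the exact sequences $0\to Z_k\to Z_{n+k}\to Z_n\to 0$, and letting $k$ vary these assemble into a short exact sequence of inverse systems whose rows are the $\delta$--tower $(Z_k)_k$, the $\delta$--tower $(Z_{n+k})_k$, and the constant tower $(Z_n)_k$, with limits $J$, $J$ and $Z_n$ respectively by Lemma \ref{jislim}. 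These systems are Mittag--Leffler by Lemma \ref{zml}, so \cite[Theorem 5]{ML} keeps the inverse limit exact and yields the displayed sequence of left $J$--modules. That $\Img(u^n)=U^nJ$ follows by induction from $\Img(u)=UJ$ and the $R$--linearity of $u$, since $u^n(J)=u(u^{n-1}(J))=u(U^{n-1}J)=U^{n-1}\,u(J)=U^{n-1}(UJ)=U^nJ$. In particular $J/U^nJ\cong Z_n$ as left $J$--modules.

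For part (b) I would first record that $U^nZ_n=0$: iterating $UZ_k=Z_{k-1}$ (condition (4), for $k\geq 2$) down to $UZ_1=0$ gives $U^nZ_n=0$. Consequently any $R$--linear map $f:J\to Z_n$ satisfies $f(U^nJ)=U^nf(J)\subseteq U^nZ_n=0$, so $f$ factors uniquely through the projection $J\to J/U^nJ$. Using $J/U^nJ\cong Z_n$ together with condition (2) applied with $m=n$, I then obtain
\[\Hom_R(J,Z_n)\cong\Hom_R(J/U^nJ,Z_n)\cong\Hom_R(Z_n,Z_n)\cong Z_n,\]
which is exactly (b).

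For part (a) the extra ingredient is that $Z_n$ is a \emph{cyclic} $R$--module. Indeed $U$ is maximal by (4), so $R/U^n$ is local with maximal ideal $U/U^n$; since $U^nZ_n=0$, the module $Z_n$ (finitely generated by (1)) is an $R/U^n$--module, and its top $Z_n/UZ_n=Z_n/Z_{n-1}\cong Z_1$ is one--dimensional over the residue field $R/U$ by Lemma \ref{n+m/n}. Nakayama's lemma then shows $Z_n$ is generated by a single element, i.e. $Z_n\cong R/I_n$ with $I_n=\Ann_R(Z_n)\supseteq U^n$. Tensoring $0\to I_n\to R\to Z_n\to 0$ with $J$ gives $Z_n\otimes_RJ\cong J/I_nJ$ as left $J$--modules. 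Finally I would identify $I_nJ=U^nJ$: the $R$--isomorphism $J/U^nJ\cong Z_n$ sends $I_n\cdot(J/U^nJ)$ to $I_n\cdot Z_n=0$, whence $I_nJ\subseteq U^nJ$, while $U^n\subseteq I_n$ gives the reverse inclusion. Therefore $Z_n\otimes_RJ\cong J/I_nJ=J/U^nJ\cong Z_n$ as left $J$--modules.

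The main obstacle is the first paragraph. Making the generalized sequence rigorous requires checking that the quotient tower $(Z_n)_k$ is genuinely constant, i.e. that the isomorphisms $Z_{n+k}/Z_k\cong Z_n$ of Lemma \ref{n+m/n} are compatible with the transition maps $\delta$ (the analogue of the identity maps in the bottom row of the diagram of Lemma \ref{jjz}), and that the middle map is $J$--linear so that $J/U^nJ\cong Z_n$ holds in $\Rmd J$ and not merely in $\Rmd R$. Once that sequence and the equality $\Img(u^n)=U^nJ$ are secured, parts (a) and (b) are short; the only other point needing care is the cyclicity of $Z_n$, where finite generation (from (1)) must be in place before invoking Nakayama.
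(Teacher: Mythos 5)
Your route is genuinely different from the paper's. The paper proves (a) by induction on $n$: the base case tensors $Z_1\otimes_R-$ with the sequence $0\to UJ\to J\to Z_1\to 0$ of Lemma \ref{jjz} and uses condition (5), and the inductive step applies the counit $\varphi^*\cdot\varphi_*\to\id_{\Rmd J}$ to $0\to Z_n\to Z_{n+1}\to Z_1\to 0$ and concludes by the five lemma; part (b) is then a chain of adjunction isomorphisms $\Hom_R(J,Z_n)\cong\Hom_R(J,\Hom_J(Z_n,Z_n))\cong\Hom_J(Z_n\otimes_RJ,Z_n)\cong\Hom_J(Z_n,Z_n)\cong Z_n$. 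You instead rest everything on a strengthened Lemma \ref{jjz}, namely $J/U^nJ\cong Z_n$; granting that, your derivations of (a) (cyclicity of $Z_n$ via Nakayama plus $I_nJ=U^nJ$) and of (b) (via $U^nZ_n=0$ and condition (2) with $m=n$) are correct and in some ways more transparent than the paper's.

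The difficulty is that the strengthened sequence is precisely the step you leave unproved, and it is not a routine verification. Two things are missing. First, the claim that the quotient tower in your short exact sequence of inverse systems is the \emph{constant} system $Z_n$: the isomorphisms $Z_{n+k}/Z_k\cong Z_n$ of Lemma \ref{n+m/n} come from an induction and the Ker--Coker lemma, and their compatibility with the transition maps $\delta$ must be checked (the paper's unnumbered remark, that $\Hom_R(-,Z_\infty)$ interchanges the two families of short exact sequences, is the natural tool here, but you do not invoke it). Second, and more seriously for (a), you need $J/U^nJ\cong Z_n$ as \emph{left $J$-modules}, where $Z_n$ carries the $J$-structure coming from $Z_n\cong\Hom_R(Z_n,Z_\infty)$; the cokernel of $u^n$ is built purely as a map of inverse limits of $R$-modules, so its $J$-linearity is an extra identification (e.g.\ with the visibly $J$-linear restriction map $J=\End_R(Z_\infty)\to\Hom_R(Z_n,Z_\infty)$) that you flag but do not supply. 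Until these are in place, (a) is incomplete, and (b), though it only needs the $R$-module version, still depends on the same unestablished isomorphism. The parts you do prove --- $\Img(u^n)=U^nJ$, $U^nZ_n=0$, cyclicity of $Z_n$, and $I_nJ=U^nJ$ --- are all correct.
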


\begin{proof} Note first that $Z_n\cong\Hom_R(Z_n,\Z_\infty)$ is a 
left $J=\End_R(\Z_\infty)$-module. 

 (a). We proceed by induction on $n$. For $n=1$ we apply the functor 
 $Z_1\otimes_R-$ to the short exact sequence $0\to UJ\to J\to Z_1\to 0$ coming from Lemma \ref{jjz}. Since $U$ is the annihilator of $Z_1$ we deduce $Z_1\otimes_RUJ=0$, so we get an isomorphism $Z_1\otimes_RJ\stackrel{\cong}\to\Z_1\otimes_RZ_1$, so $Z_1\otimes_RJ\cong Z_1$.
 
Now suppose $Z_n\otimes_RJ\cong Z_n$. Starting from the short exact sequence $0\to Z_n\to Z_{n+1}\to Z_1\to 0$ given by Lemma \ref{n+m/n}) we construct the commutative diagram with exact rows:
\[\diagram
&Z_n\otimes_RJ\rto\dto^{\cong}&Z_{n+1}\otimes_RJ\rto\dto&Z_1\otimes_RJ\rto\dto^{\cong}&0\\
0\rto&Z_n\rto&Z_{n+1}\rto&Z_1\rto&0
\enddiagram\]
whose vertical maps are obtained from the natural homomorphism \[-\otimes_RJ\cong\Hom_J(J,-)\otimes_RJ=\varphi^*\cdot\varphi_*\to\id_{\Rmd J},\] the last arrow coming from the adjunction.
Then the middle vertical arrow is an isomorphism too, proving the conclusion. 

(b). Using first the (proof of the) point (a), and second the adjunction isomorphism we obtain an isomorphism of $R$-modules
\begin{align*} \Hom_J(Z_n,Z_n)&\cong\Hom_J((\varphi^*\cdot\varphi_*)(Z_n),Z_n)\\ &\cong\Hom_R(\varphi_*(Z_n),\varphi_*(Z_n))=\Hom_R(Z_n,Z_n)\cong Z_n. 
\end{align*}

Combining it with the adjunction isomorphism between the functors 
\[\Hom_J(Z_n,-):\Rmd\rightleftarrows\Rmd R:Z_n\otimes_R-\] and the isomorphism of part (a) we get the isomorphisms of $R$-modules: 
\begin{align*}\Hom_R(J,Z_n)&\cong\Hom_R(J,\Hom_J(Z_n,Z_n))\\ &\cong\Hom_J(Z_n\otimes_RJ,Z_n)\cong\Hom_J(Z_n,Z_n)\cong Z_n
\end{align*} concluding the proof.
\end{proof}

\begin{theorem}\label{jss} With the notations above, if all conditions (1)-(5) are true,
then $\varphi:R\to J$ is a weak epimorphism of rings. Consequently 
$J$ is a self--small $R$-module.
\end{theorem}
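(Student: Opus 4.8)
The plan is to compute $\Hom_R(J,J)$ directly and to show that it is isomorphic to $J$, which is exactly the defining condition of a weak epimorphism; the self--smallness of $J$ then follows immediately from Proposition \ref{qepiss}. The whole computation rests on the presentation of $J$ as an inverse limit furnished by Lemma \ref{jislim}, together with the termwise identification supplied by Lemma \ref{homjz}(b).

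First I would move the inverse limit out of the second argument of the Hom functor. Since $\Hom_R(J,-)$ is a covariant Hom functor, it preserves all limits, so Lemma \ref{jislim} gives
\[\Hom_R(J,J)\cong\Hom_R\Bigl(J,\lim_\leftarrow Z_n\Bigr)\cong\lim_\leftarrow\Hom_R(J,Z_n).\]
Next I would feed in Lemma \ref{homjz}(b), which identifies each term $\Hom_R(J,Z_n)$ with $Z_n$, to conclude
\[\Hom_R(J,J)\cong\lim_\leftarrow Z_n\cong J,\]
the last isomorphism being again Lemma \ref{jislim}. This is precisely the assertion $\Hom_R(J,J)\cong\Hom_J(J,J)$, i.e. that $\varphi$ is a weak epimorphism, whence Proposition \ref{qepiss} delivers the self--smallness of $J$ as an $R$-module.

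The delicate point — and the step I expect to be the main obstacle — is that the termwise isomorphisms $\Hom_R(J,Z_n)\cong Z_n$ must be compatible with the transition maps, so that $\{\Hom_R(J,Z_n)\}_n$ is isomorphic to the system (IS) \emph{as an inverse system} and not merely object by object; only then does passing to the limit produce $\lim_\leftarrow Z_n\cong J$ rather than some other inverse limit. Concretely, the transition map $\Hom_R(J,Z_{n+1})\to\Hom_R(J,Z_n)$ is post--composition with $\delta_n$, and I must check that under the identifications of Lemma \ref{homjz}(b) it corresponds to $\delta_n\colon Z_{n+1}\to Z_n$ itself. This reduces to verifying that the isomorphism assembled in the proof of Lemma \ref{homjz}(b) is natural in the module argument: it is built from the tensor--hom adjunction and from the isomorphism $Z_n\otimes_R J\cong Z_n$ of part (a), all of which are natural, so the required naturality should propagate. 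The care is needed only because the module $Z_n$ occupies several positions simultaneously in that composite, so one must track the effect of $\delta_n$ through each factor before concluding that the two inverse systems agree and that the final isomorphism $\Hom_R(J,J)\cong J$ holds.
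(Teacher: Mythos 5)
Your proof is correct and follows essentially the same route as the paper: write $J\cong\lim_\leftarrow Z_n$ via Lemma \ref{jislim}, pull the inverse limit out of $\Hom_R(J,-)$, apply Lemma \ref{homjz}(b) termwise, and invoke Proposition \ref{qepiss}. Your explicit attention to the compatibility of the termwise isomorphisms with the transition maps is a point the paper leaves implicit, but it does not change the argument.
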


\begin{proof} Using the isomorphism from the point (b) of Lemma
\ref{homjz} we get 
\[\Hom_R(J,J)=\Hom_R(J,\lim_{\leftarrow}Z_n)\cong\lim_{\leftarrow}\Hom_R(J,Z_n)\cong\lim_\leftarrow Z_n\cong J,\] therefore the ring homomorphism $\varphi:R\to J$ is a weak epimorphism. Then $J$ is self--small as $R$-module, by Proposition \ref{qepiss}. 
\end{proof}

\begin{example} Let $R=\Z$ and let $p$ be a prime. The direct system
\[\Z/p^1\to\Z/p^2\to\Z/p^3\to\ldots,\]
whose direct limit is the cocyclic abelian group $\Z/p^\infty$, satisfies the conditions (1)-(5). Thus Theorem \ref{jss} gives a proof that the group of
$p$-adic integers $\J_p=\Hom_\Z(\Z/p^\infty,\Z/p^\infty)$ is
self--small (for details, see also \cite{F}).
\end{example}

\begin{example} Let $R$ be a Dedekind ring, and let $\m$ be a maximal ideal. Put $Z_i=R/\m^i$, for all $i\geq1$. Then 
$S=Z_1$ is a simple $R$-module, and modules $Z_i$ are indecomposable, uniserial, with the composition series of the form 
\[0\subseteq Z_1\subseteq \ldots\subseteq Z_{i-1}\subseteq Z_i\] whose factors are all isomorphic to $S$. Moreover 
for every $i\geq1$ there is an exact sequence 
\[0\to S\to Z_{i+1}\to Z_i\to 0.\]
For more details concerning these modules we refer to \cite[1.4]{R04}. Then we obtain a direct system (DS) satisfying the 
conditions (1)-(5), so its inverse limit, the so called {\em $\m$-adic module}, $J=\displaystyle{\lim_\leftarrow}R/\m^i$ is self--small as $R$-module. 
\end{example}

{\sc Acknowledgements.} The author would like to thank Simion Breaz and Jan \v Zemli\v cka for a (quite old) discussion, in which they made him aware of the importance of constructing 
large self--small modules, as a counterpart of their work \cite{BS07}, and to the first named of them for some (more recent) valuable comments and suggestions.  Last but not least, the author is very indepted to Jan \v Stov\'\i\v cek for pointing him out a very serious  error made in a previous version of this paper.

\end{document}